\def\N{\mathop{\mathbb N\kern 0pt}\nolimits}
\def\Q{\mathop{\mathbb Q\kern 0pt}\nolimits}
\def\R{\mathop{\mathbb R\kern 0pt}\nolimits}
\def\SS{\mathop{\mathbb S\kern 0pt}\nolimits}
\theoremstyle{plain}
\newtheorem{theorem}{Theorem}[section]
\newtheorem{lemma}[theorem]{Lemma}
\theoremstyle{definition}
\numberwithin{equation}{section}
\title{Weak factorizations of the Hardy space in terms of multilinear fractional integral operator}
\author{Dinghuai Wang \footnote{Dinghuai Wang(\texttt{Wangdh1990$@$126.com}) is supported by
    National Natural Science Foundation of China(No.11971237,12071223), the Natural Science Foundation of the Jiangsu Higher Education Institutions of China(No.19KJA320001) and Doctoral Scientific Research Foundation.} , Rongxiang Zhu \\
   [12pt] {\small Wangdh1990@126.com; ZRx1268@163.com}\\
   [12pt] {\small School of Mathematics and Statistics, Anhui Normal University, Wuhu, 241002, China}\\}
\begin{document}

\date{}
\maketitle
\thispagestyle{empty}

\begin{abstract}
We give a constructive proof of the factorization theorem for the classical Hardy space in terms of fractional integral operator. Moreover, the result is extended to the multilinear case and weighted case. As an application, we obtain the characterization of $BMO$ via the weighted boundedness of commutators of the multilinear fractional integral operator, without individual conditions on the weights class.

\vskip 0.2 true cm

\noindent
\textbf{Keywords.} Hardy space, $BMO$ space, Multilinear fractional integral operator, Weak Factorization, Weight class.

\vskip 0.2 true cm
\noindent
\textbf{2010 Mathematical Subject Classification.}  Primary: 42B20, 47B07; Secondary: 42B25, 47G99
\end{abstract}

\vskip 0.6 true cm

\section{Introduction and Statement of Main Results}

The theory of Hardy spaces is vast and complicated, it has been systematically developed and plays an important role in harmonic analysis and PDEs. A well known result of Coifman-Rochberg-Weiss \cite{CRW1976} provided a constructive proof of the weak factorizations of the classical Hardy space $H^1$ in terms of Riesz transforms. The result depends upon the duality between $H^{1}$ and $BMO$ and upon a new result linking $BMO$ and the $L^{p}$ boundedness of certain commutator operators. Later on, Li and Wick \cite{LW2018} obtained the same results in the multilinear setting.

An interesting question arises: can we replace the Riesz transforms with other operators? In this paper, we shall show the factorization theorem for Hardy space via the multilinear fractional integral operator on weighted Lebesgue spaces. The key point of the present paper is first: to obtain the factorization theorem for the classical Hardy space in terms of fractional integral operator, it needs some tedious calculations in applications; and second: to establish the factorization theorem on weighted Lebesgue spaces without individual conditions on the weights class.

As we will work in the weighted setting, we need the notion of weighted $L^p$ space: $L^p(w)=L^p(\mathbb{R}^n,\omega dx)$ denotes the collection of
measurable functions $f$ on $\mathbb{R}^n$ such that
	$$\left \| f \right \|_{L^p(\omega)}:=\left (\int_{\mathbb{R}^n}|f(x)|^p \omega(x)dx\right)^{\frac{1}{p}}<\infty.$$
We recall the definition of $A_{p}$ weight introduced by Muckenhoupt in \cite{M1972}, which give the characterization of all weights $\omega(x)$ such
that the Hardy-Littlewood maximal operator
$$
M(f)(x)=\sup_{Q\ni x}\frac{1}{|Q|}\int_{Q}|f(y)|dy
$$
is bounded on $L^{p}(\omega)$. For $1< p<\infty$ and a nonnegative locally integrable function $\omega$ on $\mathbb{R}^n$, $\omega$ is in the
Muckenhoupt $A_{p}$ class if it satisfies the condition
$$[\omega]_{A_{p}}:=\sup_{Q}\bigg(\frac{1}{|Q|}\int_{Q}\omega(x)dx\bigg)\bigg(\frac{1}{|Q|}\int_{Q}\omega(x)^{-\frac{1}{p-1}}dx\bigg)^{p-1}<\infty.$$
We write $A_{\infty}=\bigcup_{1\leq p<\infty}A_{p}$. For $\omega\in A_{\infty}$, there exists $0<\epsilon,L<\infty$ such that for all measurable subsets $S$ of cube $Q$,
\begin{align}\label{weight-eq1}
  \frac{\omega(S)}{\omega(Q)}\leq C\Big(\frac{|S|}{|Q|}\Big)^{\epsilon}
\end{align}
and
\begin{align}\label{weight-eq2}
\left( \frac{|S|}{|Q|}\right)^{L}\le C\frac{\omega(S)}{\omega(Q)}.
\end{align}

In the celebrated work \cite{LOPTT2009} Lerner et al. established a theory of weights adapted to the multilinear setting and resolved the problems proposed in \cite{GT22002}.
For $1<p_1,\dots,p_m<\infty$, $\vec{P}=(p_1,p_2,\dots,p_m)$, and
	$p$ such that $\frac{1}{p_1}+\dots +\frac{1}{p_m}=\frac{1}{p}$, a vector weight $ \vec{\omega} =(\omega_1,\omega_2,\dots, \omega_m)$ belongs to $A_{\vec{P}}$ if
	\begin{align*}
		[\vec{\omega}]_{A_{\vec{P}}}=\sup_{Q}\left (\frac{1}{|Q|}\int_{Q}\prod_{i=1}^{m}\omega_i(x)^{\frac{p}{p_i}}dx\right) \prod_{i=1}^{m} \left(\frac{1}{|Q|}\int_{Q}\omega_{i}(x)^{1-p_i^\prime}dx\right)^{\frac{p}{p_i^{\prime}}}< \infty.
	\end{align*}
	For brevity, we will often use the notation $\nu_{\vec{\omega}}=\prod_{i=1}^{m} \omega_{i}^{\frac{p}{p_i}}$ in the first integral. For $1<p_1,\dots,p_m,q<\infty$ and $\vec{P}=(p_1,p_2,\dots,p_m)$, a vector weight $\vec{\omega}=(\omega_1,\omega_2,\dots,\omega_m)$ belongs to $A_{\vec{P},q}$ if
	\begin{align*}
	[\vec{\omega}]_{A_{\vec{P},q}}=\sup_{Q}\left (\frac{1}{|Q|}\int_{Q}\prod_{i=1}^{m}\omega_i(x)^{q}dx\right) \prod_{i=1}^{m} \left(\frac{1}{|Q|}\int_{Q}\omega_{i}(x)^{-p_i^\prime}dx\right)^{\frac{q}{p_i^{\prime}}}< \infty.
	\end{align*}
	As with the $A_{\vec{P}}$ weights, for brevity we will use $\mu_{\vec{\omega}}=\prod_{i=1}^{m}\omega_{i}^{q} $.
It was shown by Moen in \cite{M2009} that if $\vec{\omega}\in A_{\vec{P},q}$, then $\omega_{i}^{-p_i^{\prime}}\in A_{mp_i^{\prime}}$ and $\mu_{\vec{\omega}}\in A_{mq}$.
	
We now recall the definition of multilinear fractional integral operator. For $m\in \mathbb{N}$ and $0<\alpha<mn$, the multilinear fractional integral operator is defined by
\begin{align*}
	I_{\alpha}(f_{1},f_{2},\dots f_{m})(x)=\int_{\mathbb{R}^{mn}} \frac{\prod_{j=1}^{m}f_{j}(y_{j})}{(|x-y_1|+\dots+|x-y_m|)^{mn-\alpha}}dy_{1}\dots y_{m}.
 \end{align*}
For $l=1,2,\dots ,m,$ we define the multilinear "multiplication"operators $ {\textstyle \prod _{l}}$ as follows.
	\begin{align}\label{commutator}
		{\textstyle \prod _{l}} (g,h_{1},\dots,h_{m})(x):=h_{l}(I_{\alpha}^{*})_{l}(h_{1},\dots,h_{l-1},g,h_{l+1},\dots,h_{m)}-gI_{\alpha}(h_{1},\dots,h_{m})(x).
	\end{align}
where $(I_{\alpha}^{*})_{l}$ is the $l$-th partial adjoint of $I_{\alpha}$, and it is easy to see that $(I_{\alpha}^{*})_{l}=I_{\alpha}$.

Our main result is then the following factorization result for $H^{1}(\mathbb{R}^{n})$ in terms of the multilinear operator $ {\prod_{l}}$. The result is new even in the linear case and unweighted case.
\begin{theorem}\label{main1}	
Let $1\le l\le m$, $0<\alpha<mn$, $1< p_{1},\dots,p_{m},q<\infty$, $\frac{1}{p_1}+\dots+\frac{1}{p_m} -\frac{1}{q}=\frac{\alpha}{n}$ and $\vec{\omega}\in A_{\vec{P},q}$.
Then for any $f\in H^{1}(\mathbb{R}^{n})$, there exists sequences $\left \{ \lambda_{s}^{k}  \right \}\in \ell_{1}$ and functions $g_{s}^{k}\in L^{q^{\prime}}(\mu^{1-q^{\prime}}_{\vec{\omega}}),h_{s,1}^{k}\in L^{p_{1}}(\omega_1^{p_1}),\dots ,h_{s,m}^{k}\in L^{p_{m}}(\omega_m^{p_m})$ such that
    \begin{align}\label{f=}
     f=\sum_{k=1}^{\infty}\sum_{s=1}^{\infty}\lambda_{s}^{k}
     {\textstyle \prod_{l}}(g_{s}^{k},h_{s,1}^{k},\dots ,h_{s,m}^{k})
     \end{align}
     in the sense of $H^{1}(\mathbb{R}^{n})$. Moreover,
      \begin{align*}
      \|f\|_{H^{1}\left(\mathbb{R}^{n}\right)}\approx
      \inf\left\{\sum_{k=1}^{\infty}\sum_{s=1}^{\infty}\left|\lambda_{s}^{k}\right|
      \left\|g_{s}^{k}\right\|_{L^{q^{\prime}}(\mu^{1-q^{\prime}}_{\vec{\omega}})}
      \left\|h_{s,1}^{k}\right\|_{L^{p_{1}}(\omega_1^{p_1})}\cdots\left\|h_{s,m}^{k}\right\|_{L^{p_{m}}(\omega_m^{p_m})}\right\},
      \end{align*}
where the infimum above is taken over all possible representations of $f$ that satisfy \eqref{f=}.
\end{theorem}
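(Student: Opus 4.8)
The plan is to prove the two inequalities hidden in the symbol $\approx$ separately. The direction $\|f\|_{H^1} \ls \inf\{\cdots\}$ is the boundedness estimate: it suffices to show that each building block lies in $H^1$ with $\|\prod_l(g,h_1,\dots,h_m)\|_{H^1} \ls \|g\|_{L^{q'}(\mu_{\vec\omega}^{1-q'})} \prod_{i=1}^m \|h_i\|_{L^{p_i}(\omega_i^{p_i})}$, since then summing over $k,s$ and taking the infimum over representations \eqref{f=} gives the bound. The reverse direction $\inf\{\cdots\} \ls \|f\|_{H^1}$ is the genuine factorization and must be proved constructively.

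For the boundedness step I would argue by $H^1$--$BMO$ duality. Fix $b \in BMO$ with $\|b\|_{BMO} \le 1$ and pair it against a single block. Using $(I_\alpha^*)_l = I_\alpha$ and moving $b$ across $I_\alpha$ by its self-adjointness in the $l$-th slot, the pairing $\int_{\R^n} b(x) \prod_l(g,h_1,\dots,h_m)(x)\,dx$ collapses into $\int_{\R^n} g(x)\, [b,I_\alpha]_l(h_1,\dots,h_m)(x)\,dx$, where $[b,I_\alpha]_l$ is the commutator of $I_\alpha$ with multiplication by $b$ in the $l$-th entry. Weighted H\"older with the conjugate pair $L^{q'}(\mu_{\vec\omega}^{1-q'})$ and $L^{q}(\mu_{\vec\omega})$ (the exponents matching exactly because $1-q' = -q'/q$), followed by the known weighted boundedness $[b,I_\alpha]_l : L^{p_1}(\omega_1^{p_1}) \times \dots \times L^{p_m}(\omega_m^{p_m}) \to L^q(\mu_{\vec\omega})$ with operator norm $\ls \|b\|_{BMO}$ (available precisely because $\vec\omega \in A_{\vec P,q}$ forces $\mu_{\vec\omega} \in A_{mq}$, as recalled from Moen), yields the estimate. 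Taking the supremum over admissible $b$ closes this direction.

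The factorization direction rests on an approximation lemma: for every cube $Q$ I would construct $g$ and $h_1,\dots,h_m$, each a suitably normalized indicator or bump with the $h_i$ concentrated on $Q$ and $g$ concentrated on a translate $Q'$ placed at a controlled distance, so that $\prod_l(g,h_1,\dots,h_m)$ agrees with a normalized multiple of $\chi_Q$ up to an $H^1$-error of small relative size. The mechanism is that for $x \in Q$ the kernel $(|x-y_1|+\dots+|x-y_m|)^{-(mn-\alpha)}$ is essentially constant as the $y_j$ range over $Q$ and $Q'$, so both $I_\alpha(h_1,\dots,g,\dots,h_m)$ and $I_\alpha(h_1,\dots,h_m)$ are nearly constant on $Q$, and their weighted combination in \eqref{commutator} produces the advertised bump. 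I would then run the Coifman--Rochberg--Weiss iteration: given $f \in H^1$, decompose it into atoms, approximate it by a finite combination of blocks so that the remainder has $H^1$-norm at most a fixed fraction $\theta < 1$ of $\|f\|_{H^1}$ while $\sum|\lambda|\,\|g\|_{L^{q'}(\mu_{\vec\omega}^{1-q'})}\prod_i\|h_i\|_{L^{p_i}(\omega_i^{p_i})} \ls \|f\|_{H^1}$, and iterate on the remainder; the resulting geometrically convergent series furnishes \eqref{f=} with the desired norm control.

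The principal obstacle is the approximation lemma in the weighted setting. One must estimate the normalizing constants and the error term using only the joint condition $\vec\omega \in A_{\vec P,q}$, without assuming each $\omega_i$ individually lies in a Muckenhoupt class; here the doubling and $A_\infty$ comparison estimates \eqref{weight-eq1}--\eqref{weight-eq2} are the essential tools, converting Lebesgue-measure localization estimates into the weighted norms $\|g\|_{L^{q'}(\mu_{\vec\omega}^{1-q'})}$ and $\|h_i\|_{L^{p_i}(\omega_i^{p_i})}$ while keeping the product of these norms comparable to $\|\chi_Q\|_{H^1}$. Controlling the fractional kernel's tail, so that the $H^1$-error between $\prod_l(g,\vec h)$ and the idealized bump is genuinely small, is the ``tedious calculation'' the introduction warns about, and is the step I expect to demand the most care.
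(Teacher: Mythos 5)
Your overall architecture coincides with the paper's: the inequality $\|f\|_{H^1}\lesssim\inf\{\cdots\}$ is proved exactly as you propose, by pairing a block against $b\in BMO$, collapsing the pairing to $\int g\,[b,I_\alpha]_l(h_1,\dots,h_m)\,dx$, applying weighted H\"older and the known weighted boundedness of the commutator (the paper's Lemma \ref{lem-main2}); and the reverse inequality is obtained from an approximation lemma for atoms followed by the Coifman--Rochberg--Weiss iteration, again as you propose.

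However, there is a genuine error in your description of the approximation lemma, and it sits exactly where the main idea of the construction lives. You propose to take all of $g,h_1,\dots,h_m$ to be normalized indicators or bumps and to show that $\prod_l(g,h_1,\dots,h_m)$ is close in $H^1$ to a normalized multiple of $\chi_Q$. This cannot work: by the definition \eqref{commutator} and the self-adjointness of $I_\alpha$ in the $l$-th slot, every block $\prod_l(g,\vec h)$ has vanishing integral, whereas $c\chi_Q$ with $c\neq 0$ does not, so their difference fails the cancellation required for membership in $H^1$ and the claimed smallness is vacuous. The correct construction (Lemma \ref{lem-main3} of the paper) approximates the \emph{atom} $a$ itself, and does so by inserting $a$ into one of the slots: $h_l$ is taken proportional to $a(x)$ divided by the nearly constant value $(I_\alpha^*)_l(h_1,\dots,g,\dots,h_m)(x_0)$, while $g$ and the remaining $h_j$ are weighted indicators of balls translated a distance $Mr$ away. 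Then $h_l\,(I_\alpha^*)_l(\cdots)(x)$ reproduces $a(x)$ up to a kernel-smoothness error of size $O(M^{-1}r^{-n})$ on $B(x_0,r)$; the term $g\,I_\alpha(h_1,\dots,h_m)$ is $O(g\,M^{-1}r^{-n})$ on the translated ball, using the cancellation of $a$; and the resulting mean-zero two-bump error has $H^1$ norm $O(\log M/M)$. That last step itself requires an auxiliary result (the paper's Lemma \ref{lem-main1}) bounding the $H^1$ norm of a mean-zero function supported on two separated balls by $\log(|x_1-x_2|/r)$, which your proposal does not anticipate. Without placing the atom into one of the slots, your blocks cannot reproduce an arbitrary atom's profile and cancellation, and the iteration cannot close.
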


As a direct application, we will give the characterization of $BMO$ via commutators of the multilinear fractional integral operator on weighted Lebesgue spaces, without individual conditions on the weights class.
In analogy with the linear case, we define the $l$-th possible multilinear commutators of the multilinear fractional integral operator $I_{\alpha}$ as follows.
	\begin{align*}
	\left [b,I_{\alpha} \right ]_{l}(f_{1},\dots,f_{m})(x)=I_{\alpha}(f_{1},\dots,bf_{l},\dots,f_{m})(x)-bI_{\alpha}(f_{1},\dots,f_{m})(x).
	\end{align*}
It was first shown in \cite{CX2010} that given $0<\alpha<mn$, $1<p_1,\cdots,p_m<\infty$, $\frac{1}{p}=\frac{1}{p_1}+\cdots +\frac{1}{p_m}$, and $\frac{1}{p}-\frac{1}{q}=\frac{\alpha}{n}$,
 if $(\omega_1^r,\omega_2^r,\dots,\omega_m^r)\in A_{\vec{P}/r,q/r}$ for some $r>1$ with $0<r\alpha<mn$, and $\mu_{\vec{\omega}}\in A_{\infty}$, then
   \begin{align*}
	[b,I_{\alpha}]_l:L^{p_1}(\omega_1^{p_1})\times \cdots \times L^{p_m}(\omega_m^{p_m})\to L^{q}(\mu_{\vec{\omega}})
   \end{align*}
Later on, in \cite{CW2013}, the result was improved and the explicitly stated bump condition involving
$r > 1$ was removed. Conversely, Guo, Lian and Wu \cite{GLW2020} proved that the $BMO$ is necessary for the boundedness of $[b,I_{\alpha}]$ when $\omega^{p_{i}}_{i}\in A_{\infty}$ with $i=1,\cdots,m$.
Thus, it is a nature problem to obtain the weighted results without the restriction of that $\omega^{p_{i}}_{i}\in A_{\infty}$.

\begin{theorem} \label{main2}
 Let $1\le l\le m$, $0<\alpha<mn$, $1< p_{1},\dots,p_{m},q<\infty$, $\frac{1}{p_1}+\dots+\frac{1}{p_m} -\frac{1}{q}=\frac{\alpha}{n}$ and $\vec{\omega}\in A_{\vec{P},q}$.
The commutator $\left [b,I_{\alpha} \right]_{l}$ is bounded from $L^{p_1}(\omega_1^{p_1})\times \dots \times L^{p_m}(\omega_m^{p_m})$ to $L^{q}(\mu_{\vec{\omega}})$ if and only if $b\in BMO$.
\end{theorem}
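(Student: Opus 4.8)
The plan is to establish the two implications separately. The sufficiency ($b\in BMO$ implies boundedness) is essentially already available in the literature, while the necessity is the genuinely new part and is where the factorization Theorem \ref{main1} enters.

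For the sufficiency, suppose $b\in BMO$. Since $\vec{\omega}\in A_{\vec{P},q}$, Moen's result recalled above gives $\mu_{\vec{\omega}}\in A_{mq}\subset A_{\infty}$, so the hypotheses of the weighted commutator estimate of Chen--Wu \cite{CW2013} are satisfied. That estimate directly yields the boundedness of $[b,I_{\alpha}]_{l}$ from $L^{p_1}(\omega_1^{p_1})\times\dots\times L^{p_m}(\omega_m^{p_m})$ into $L^{q}(\mu_{\vec{\omega}})$, with operator norm controlled by $\|b\|_{BMO}$; no new argument is required here.

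For the necessity, assume $[b,I_{\alpha}]_{l}$ is bounded. First I would record the adjoint identity linking the multiplication operator $\prod_{l}$ to the commutator. Using the self-adjointness $(I_{\alpha}^{*})_{l}=I_{\alpha}$ together with Fubini's theorem, one checks for suitable $g,h_1,\dots,h_m$ that
\[
\int_{\mathbb{R}^{n}} b(x)\,\textstyle\prod_{l}(g,h_1,\dots,h_m)(x)\,dx
=\int_{\mathbb{R}^{n}}[b,I_{\alpha}]_{l}(h_1,\dots,h_m)(x)\,g(x)\,dx.
\]
Next, given $f\in H^{1}(\mathbb{R}^n)$, I would invoke Theorem \ref{main1} to write $f=\sum_{k,s}\lambda_{s}^{k}\prod_{l}(g_{s}^{k},h_{s,1}^{k},\dots,h_{s,m}^{k})$ with the accompanying norm equivalence. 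Applying the identity term by term gives $\langle b,f\rangle=\sum_{k,s}\lambda_{s}^{k}\int [b,I_{\alpha}]_{l}(h_{s,1}^{k},\dots,h_{s,m}^{k})\,g_{s}^{k}$. A weighted H\"older inequality, using that $-q'/q=1-q'$ so that the weights $\mu_{\vec{\omega}}$ and $\mu_{\vec{\omega}}^{1-q'}$ are conjugate, bounds each integral by $\|[b,I_{\alpha}]_{l}(h_{s,1}^{k},\dots,h_{s,m}^{k})\|_{L^{q}(\mu_{\vec{\omega}})}\,\|g_{s}^{k}\|_{L^{q'}(\mu_{\vec{\omega}}^{1-q'})}$; the assumed boundedness then controls the first factor by $\prod_{i}\|h_{s,i}^{k}\|_{L^{p_i}(\omega_i^{p_i})}$. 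Summing and taking the infimum over all representations of $f$ yields $|\langle b,f\rangle|\lesssim \|[b,I_{\alpha}]_{l}\|\,\|f\|_{H^{1}}$, and the Fefferman--Stein duality $(H^{1})^{*}=BMO$ then gives $b\in BMO$ with $\|b\|_{BMO}\lesssim\|[b,I_{\alpha}]_{l}\|$.

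The main obstacle is not any individual inequality but the rigorous treatment of the pairing $\langle b,f\rangle$. I would first restrict to a dense subclass of $H^1$ (finite combinations of products $\prod_{l}(g,\vec{h})$ with $g$ and the $h_i$ bounded and compactly supported), so that every integral converges absolutely and the interchange of summation and integration is legitimate, and only afterwards pass to the limit using the $H^{1}$-convergence in \eqref{f=} and the density of this class. Verifying the adjoint identity likewise requires justifying Fubini's theorem for the kernel of $I_{\alpha}$, which is where the restriction $0<\alpha<mn$ and the integrability of the chosen test functions are used. Once these convergence matters are settled the estimates are routine, so the real content of the theorem is carried by Theorem \ref{main1}.
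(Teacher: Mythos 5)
Your proposal is correct and follows essentially the same route as the paper: sufficiency is delegated to the weighted commutator estimate of Chen--Wu, and necessity combines the adjoint identity $\langle b,\prod_{l}(g,\vec{h})\rangle=\langle g,[b,I_{\alpha}]_{l}(\vec{h})\rangle$, the weak factorization of Theorem \ref{main1}, the weighted H\"older duality between $L^{q}(\mu_{\vec{\omega}})$ and $L^{q'}(\mu_{\vec{\omega}}^{1-q'})$, and $H^1$--$BMO$ duality. Your additional care about justifying the interchange of summation and integration on a dense subclass is a reasonable refinement that the paper glosses over, but it does not change the substance of the argument.
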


\section{Auxiliary lemmas and Proofs of Theorem \ref{main1}-\ref{main2}}

We first introduce the atomic decomposition of Hardy spaces.
Let $0<p\leq 1<q\leq \infty$. A function $a$ is called a $(p,q,s)$ atom for $H^p(\mathbb{R}^n)$ if there exists a cube $Q$ such that
\begin{itemize}
  \item [\rm (i)]  $a$ is supported in $Q$;
  \item [\rm (ii)] $\|a\|_{L^{q}(\mathbb{R}^n)}\leq |Q|^{\frac{1}{q}-\frac{1}{p}}$;
  \item [\rm (iii)] $\int_{\mathbb{R}^n} a(x)x^{\gamma}dx=0$ for all multi-indices $\gamma$ with $|\gamma|\leq [\frac{n}{p}-n]$.
  \end{itemize}

To prove Theorem \ref{main1}, we need the following auxiliary lemmas.
\begin{lemma} \label{lem-main1}
     Let $f$ be a function satisfying the following estimates:
\begin{itemize}
  \item [\rm (i)]  $\int_{\mathbb{R}^n}f(x)dx=0;$
  \item [\rm (ii)] there exist balls $B_1=B(x_1,r)$ and $B_2=B(x_2,r)$ for some $x_1,x_2\in \mathbb{R}^n$ and $r>0$ such that
     $$|f(x)|\le h_1(x)\chi_{B_1}(x)+h_2(x)\chi_{B_2}(x),$$
     where $\left\|h_i\right\|_{L^q(\mathbb{R}^n)}\le Cr^{-n/q'}$ and $1<q\leq \infty$;
  \item [\rm (iii)] $|x_1-x_2|\ge 4r$.
  \end{itemize}
  Then there exists a positive constant $C$ independent of $x_1,x_2,r$ such that
     $$\left \|f\right\|_{H^1(\mathbb{R}^n)}\le{C}\log\frac{|x_1-x_2|}{r}.$$
      \end{lemma}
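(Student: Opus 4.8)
The plan is to split $f$ into two pieces supported near $x_1$ and $x_2$, correct each to have mean zero so that it becomes a fixed multiple of an atom, and absorb the corrections into a ``dipole'' whose $H^1$ norm carries the logarithmic factor. Since $|x_1-x_2|\ge 4r$, the balls $B_1$ and $B_2$ are disjoint, so condition (ii) forces $\supp f\subset B_1\cup B_2$ and $|f|\le h_i$ on $B_i$. Set $m_i=\int_{B_i}f\,dx$; by (i) we have $m_1+m_2=0$, and by H\"older's inequality together with the normalization in (ii),
\[
|m_i|\le\int_{B_i}h_i\,dx\le\|h_i\|_{L^q(\mathbb{R}^n)}\,|B_i|^{1/q'}\le C,
\]
uniformly in $r$. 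Fix the normalized bumps $\psi_i=|B_i|^{-1}\chi_{B_i}$, so that $\int\psi_i=1$ and $\|\psi_i\|_{L^q}\le C|B_i|^{1/q-1}$. Then I would write
\[
f=\underbrace{\big(f\chi_{B_1}-m_1\psi_1\big)}_{=:a_1}+\underbrace{\big(f\chi_{B_2}-m_2\psi_2\big)}_{=:a_2}+\underbrace{m_1(\psi_1-\psi_2)}_{=:D},
\]
where the last term is $m_1\psi_1+m_2\psi_2$ simplified via $m_2=-m_1$.

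First I would dispose of $a_1$ and $a_2$. Each $a_i$ is supported in $B_i$, satisfies $\int a_i=0$ by construction, and has $\|a_i\|_{L^q}\le\|h_i\|_{L^q}+|m_i|\,\|\psi_i\|_{L^q}\le C|B_i|^{1/q-1}$. Hence, after passing between balls and cubes (which costs only a constant), each $a_i$ is a bounded multiple of a $(1,q,0)$ atom, so $\|a_1\|_{H^1}+\|a_2\|_{H^1}\le C$ independently of $x_1,x_2,r$.

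The main obstacle is the dipole $D=m_1(\psi_1-\psi_2)$ with $|m_1|\le C$, so everything reduces to showing $\|\psi_1-\psi_2\|_{H^1}\le C\log(|x_1-x_2|/r)$. To prove this I would transport unit mass from $B_1$ to $B_2$ through a telescoping chain of dilates. Put $R=|x_1-x_2|$ and $\Phi_i=|B(x_i,R)|^{-1}\chi_{B(x_i,R)}$, and split $\psi_1-\psi_2=(\psi_1-\Phi_1)+(\Phi_1-\Phi_2)+(\Phi_2-\psi_2)$. The middle term $\Phi_1-\Phi_2$ is mean zero, supported in a ball of radius $2R$, with $L^\infty$ norm at most $CR^{-n}$, hence a bounded multiple of a $(1,\infty,0)$ atom. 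For $\psi_1-\Phi_1$ I would take concentric balls $D_0=B_1\subset D_1\subset\cdots\subset D_N=B(x_1,R)$ of doubling radii, so that $N\approx\log_2(R/r)$, set $\eta_j=|D_j|^{-1}\chi_{D_j}$ (so $\eta_0=\psi_1$ and $\eta_N=\Phi_1$), and telescope
\[
\psi_1-\Phi_1=\eta_0-\eta_N=\sum_{j=0}^{N-1}(\eta_j-\eta_{j+1}).
\]
Each summand is supported in $D_{j+1}$, has mean zero, and satisfies $\|\eta_j-\eta_{j+1}\|_{L^\infty}\le C|D_{j+1}|^{-1}$, so it is a bounded multiple of a $(1,\infty,0)$ atom with $H^1$ norm at most $C$; summing the $N$ terms gives $\|\psi_1-\Phi_1\|_{H^1}\le CN\le C\log(|x_1-x_2|/r)$. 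The estimate for $\Phi_2-\psi_2$ is identical, and combining the three pieces yields the claimed bound on $\|\psi_1-\psi_2\|_{H^1}$.

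Putting everything together, $\|f\|_{H^1}\le\|a_1\|_{H^1}+\|a_2\|_{H^1}+|m_1|\,\|\psi_1-\psi_2\|_{H^1}\le C+C\log(|x_1-x_2|/r)\le C\log(|x_1-x_2|/r)$, where the final step uses $|x_1-x_2|/r\ge 4$ so that the logarithm is bounded away from zero. The delicate point throughout is the logarithmic counting in the telescoping chain: individual atoms have scale-invariant $H^1$ norm, and it is precisely the number of dyadic scales separating $B_1$ from $B_2$ --- namely $\sim\log(|x_1-x_2|/r)$ --- that governs the cost of moving unit mass from one ball to the other.
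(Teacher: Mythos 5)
Your proof is correct and follows essentially the same route as the paper: both extract a mean-zero atom on each ball and then pay a logarithmic number of dyadically dilated atoms to transport the residual mass out to scale $|x_1-x_2|$, where a single bridging atom exploits the global cancellation $\int f=0$. The only cosmetic difference is that the paper telescopes the averages $\tilde{\lambda}_i^{j}\chi_{2^{j}B_i}$ directly (these coincide with your $m_i\eta_j$) and centers its bridging atom at the midpoint $\frac{x_1+x_2}{2}$ rather than writing it as $\Phi_1-\Phi_2$.
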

\begin{proof}
Assume that  $f:=f_1+f_2$, where $|f_{i}|\leq h_{i}$ and supp $f_{i}\subset B_i$ for $i=1,2.$ We will show that $f$ has the following atomic decomposition
    \begin{align}\label{lem1-1}
    f=\sum_{i=1}^{2}\sum_{j=1}^{J_0+1}\lambda_i^ja_i^j,
    \end{align}
    where $J_0$ is the smallest integer larger than $\log \frac{|x_{1}-x_{2}|}{r}$ and for each $j$, $a_i^j$
    is a atom and $\lambda_i^j$ a real number satisfying that
    \begin{align}\label{lem1-2}
    |\lambda_i^j|\lesssim 1.
    \end{align}

 To this end, for $i=1,2$, we write
   $$f_i(x)=\left[f_i(x)-\tilde{\lambda}_i^1\chi_{B_i}\right]+\tilde{\lambda}_i^1\chi_{B_{i}}=:f_i^1(x)+\tilde{\lambda}_i^1\chi_{B_{i}},$$
   where
   $$\tilde{\lambda}_i^1:=\frac{1}{|B_{i}|}\int_{B_{i}}f_i(x)dx.$$
   Let $\lambda_i^1:=\|f^1_i\|_{L^q(\mathbb{R}^n)}|B_{i}|^{\frac{n}{q^{\prime}}}\lesssim 1$ and $a_i^1:=f_i^1/\lambda_i^1.$ From the fact that
   $$\|a_{i}^1\|_{L^{q}(\mathbb{R}^n)}=\frac{\|f_{i}^1\|_{L^{q}(\mathbb{R}^n)}}{\lambda_{i}^1}\leq |B_{i}|^{1/q-1} ,$$
   we know that $a_i^1$ is a $(1,q,0)$-atom supported on $B_{i}$ and $\lambda_i^1$ satisfies \eqref{lem1-2}.
We further write $$\tilde{\lambda}_i^1\chi_{B_{i}}=\tilde{\lambda}_i^1\chi_{B_{i}}-\tilde{\lambda}_i^2\chi_{2B_i}+\tilde{\lambda}_{i}^2\chi_{2B_i}
   =:f_i^2+\tilde{\lambda}_{i}^2\chi_{2B_i},$$
   where $$\tilde{\lambda}_{i}^2:=\frac{1}{|2B_i|}\int_{2B_i}f_i(x)dx.$$
   Let
   $\lambda_i^2:=\|f^2_i\|_{L^q(\mathbb{R}^n)}|2B_i|^{\frac{n}{q^{\prime}}}$
   and $a_i^2:=f_i^2/\lambda_i^2.$ Then we see that $a_i^2$ is a atom supported on $2B_i$ and
   $$|\lambda_i^2|\lesssim(|\tilde\lambda_i^1|+|\tilde\lambda_i^2|)|2B_i|\lesssim 1.$$

  Continuing in this process with $j\in{2,3,\cdots,J_0},$
  \begin{align*}
  	\tilde\lambda_i^j&:=\frac{1}{|2^jB_i|}\int_{2^jB_{i}}f_i(x)dx,\\
   	f_i^j&:=\tilde\lambda_i^{j-1}\chi_{2^{j-1}B_i}-\tilde\lambda_i^{j}\chi_{2^{j}B_i},\\
  	\lambda_i^j&:=\|f_i^j\|_{L^q(\mathbb{R}^n)}|2^{j}B_i|^{1/q^{\prime}},\\
  	a_i^j&:=f_i^j/\lambda_i^j,
  \end{align*}
 we obtain that
 $$f=\sum_{i=1}^{2}[\sum_{j=1}^{J_0}f_i^j]+\sum_{i=1}^{2}{\tilde\lambda_i^{J_0}}\chi_{2^{J_0}B_i}=\sum_{i=1}^{2}[\sum_{j=1}^{J_0}\lambda_i^ja_i^j]+
  \sum_{i=1}^{2}{\tilde\lambda_i^{J_0}}\chi_{2^{J_0}B_i},$$ where each $i$ and $j$, $a_i^j$ is a $(1,q,0)$-atom and $\lambda_i^j\lesssim 1.$

  For $\sum_{i=1}^{2}\tilde{\lambda}_i^{J_0}\chi_{2^{J_0}B_i},$ we set
   \begin{align*}
   \tilde{\lambda}^{J_0}&:=\frac{1}{|B(\frac{x_1+x_2}{2},2^{J_0+1}r)|}\int_{B(x_1,r)}f_1(x)dx\\
   &:=-\frac{1}{|B(\frac{x_1+x_2}{2},2^{J_0+1}r)|}\int_{B(x_2,r)}f_2(x)dx.
   \end{align*}
Which shows that
   \begin{align*}
   	&\sum_{i=1}^{2}\tilde{\lambda}_i^{J_0}\chi_{B(x_i,2^{J_0}r)}\\
   	&=[\tilde{\lambda}_1^{J_0}\chi_{B(x_1,2^{J_0}r)}-\tilde{\lambda}^{J_0}
   	\chi_{B(\frac{x_1+x_2}{2},2^{J_0+1}r)}]+[\tilde{\lambda}^{J_0}\chi_{B(\frac{x_1+x_2}{2},2^{J_0+1}r)}+\tilde{\lambda}_2^{J_0}\chi_{B(x_2,2^{J_0}r)}]\\
   	&=:\sum_{i=1}^{2}f_i^{J_0+1}.
   	\end{align*}
For $i=1,2$, let $$\lambda_i^{J_0+1}:=\|f_i^{J_0+1}\|_{L^{q}(\mathbb{R}^n)}|B(\frac{x_1+x_2}{2},2^{J_0+1}r)|$$ and $$a_i^{J_0+1}:=f_i^{J_0+1}/ \lambda_i^{J_0+1}.$$
Also, $a_i^{J_0+1}$ is a $(1,q,0)$-atom and $\lambda_i^{J_0+1}$ satisfies \eqref{lem1-2}. Thus, we have \eqref{lem1-1} holds, which implies
that $f\in H^1(\mathbb{R}^n)$ with
     $$\|f\|_{H^1(\mathbb{R}^n)}\le\sum_{i=1}^{2}\sum_{j=1}^{J_0+1}|\lambda_i^j|\lesssim \log\frac{|x_1-x_2|}{r}.$$
This finishes the proof of Lemma \ref{lem-main1}.
\end{proof}

     \begin{lemma} \label{lem-main2}
     Suppose  $1\le l\le m$, $1< p_{1},\dots,p_{m}<\infty$ and $0<\alpha<mn$ with
     $$\frac{1}{p_{1}}+ \dots +\frac{1}{p_{m}}-\frac{1}{q}=\frac{\alpha}{n}.$$
     There exists a positive constant $C$ such that for any $g\in L^{q^{\prime}}(\mu^{1-q^{\prime}}_{\vec{\omega}})$ and $h_{i}\in L^{p_{i}}(\omega_i^{p_i}), i=1,2,\dots ,m$,
     \begin{align*}
     \left\|\Pi_{l}\left(g,h_{1},\ldots,h_{m}\right)\right\|_{H^{1}\left(\mathbb{R}^{n}\right)}\leq{C}\|g\|_{L^{q^{\prime}}(\mu^{1-q^{\prime}}_{\vec{\omega}})}\left\|h_{1}\right\|_{L^{p_{1}
     }(\omega_1^{p_1})} \cdots\left\|h_{m}\right\|_{L^{p_{m}}(\omega_m^{p_m})}.
    \end{align*}
    \end{lemma}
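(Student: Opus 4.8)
The plan is to reduce the $H^{1}$ estimate to the already-available weighted boundedness of the commutator $[b,I_\alpha]_l$ by means of the Fefferman duality $(H^1)^*=BMO$. Concretely, I would use the dual characterization that, for $F\in L^1(\mathbb{R}^n)$,
\begin{align*}
\|F\|_{H^{1}(\mathbb{R}^{n})}\approx\sup\Big\{\Big|\int_{\mathbb{R}^{n}}F(x)b(x)\,dx\Big|:\ b\in BMO,\ \|b\|_{BMO}\le1\Big\},
\end{align*}
so that it suffices to bound $\int_{\mathbb{R}^{n}}b\,\Pi_l(g,h_1,\dots,h_m)\,dx$ uniformly over the unit ball of $BMO$.

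The first step is the algebraic identity
\begin{align*}
\int_{\mathbb{R}^{n}}b(x)\,\Pi_l(g,h_1,\dots,h_m)(x)\,dx=\int_{\mathbb{R}^{n}}g(x)\,[b,I_\alpha]_l(h_1,\dots,h_m)(x)\,dx.
\end{align*}
To obtain it I would expand $\Pi_l=h_l\,I_\alpha(h_1,\dots,h_{l-1},g,h_{l+1},\dots,h_m)-g\,I_\alpha(h_1,\dots,h_m)$, and then transfer $b\,h_l$ into the first multilinear integral using the self-adjointness $(I_\alpha^*)_l=I_\alpha$, which is legitimate because the kernel $(|x-y_1|+\dots+|x-y_m|)^{-(mn-\alpha)}$ is symmetric in $x$ and $y_l$. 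A single application of Fubini turns $\int b\,h_l\,I_\alpha(\dots,g,\dots)$ into $\int g\,I_\alpha(\dots,b\,h_l,\dots)$, and subtracting the second term reproduces exactly $\int g\,[b,I_\alpha]_l(h_1,\dots,h_m)$.

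With the identity in hand, I would apply H\"older's inequality with respect to Lebesgue measure, splitting the integrand as $g\,[b,I_\alpha]_l(h_1,\dots,h_m)=\big(g\,\mu_{\vec{\omega}}^{-1/q}\big)\big([b,I_\alpha]_l(h_1,\dots,h_m)\,\mu_{\vec{\omega}}^{1/q}\big)$; since $-q'/q=1-q'$, the conjugate exponents $q'$ and $q$ give
\begin{align*}
\Big|\int_{\mathbb{R}^{n}}g\,[b,I_\alpha]_l(h_1,\dots,h_m)\,dx\Big|\le\|g\|_{L^{q'}(\mu^{1-q'}_{\vec{\omega}})}\,\big\|[b,I_\alpha]_l(h_1,\dots,h_m)\big\|_{L^{q}(\mu_{\vec{\omega}})}.
\end{align*}
Because $\vec{\omega}\in A_{\vec{P},q}$ and $b\in BMO$, the weighted commutator bound of \cite{CX2010,CW2013} controls the last factor by $\|b\|_{BMO}\prod_{i=1}^{m}\|h_i\|_{L^{p_i}(\omega_i^{p_i})}$. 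Taking the supremum over $\|b\|_{BMO}\le1$ and invoking the duality displayed above then yields the asserted estimate.

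The point needing care --- and the main obstacle --- is the rigorous use of duality, since the dual characterization presupposes $\Pi_l(g,h_1,\dots,h_m)\in L^1(\mathbb{R}^{n})$. I would establish this first for $g,h_1,\dots,h_m$ in the dense class of bounded, compactly supported functions, for which each term of $\Pi_l$ is a compactly supported $L^1$ function (the two $I_\alpha$ factors being locally integrable, as $0<mn-\alpha<mn$) and for which the Fubini step above is plainly valid; the general case then follows from the multilinearity of $\Pi_l$, the target bound, and a routine density argument. If one prefers to argue directly for all admissible inputs, the $L^1$ membership can be verified term by term: $g\,I_\alpha(h_1,\dots,h_m)\in L^1$ follows from the same H\"older pairing together with Moen's weighted bound $I_\alpha:\prod_i L^{p_i}(\omega_i^{p_i})\to L^{q}(\mu_{\vec{\omega}})$ \cite{M2009}, while for $h_l\,I_\alpha(h_1,\dots,g,\dots,h_m)$ one observes that replacing $1/p_l$ by $1/q'$ in the scaling relation $\sum_i 1/p_i-1/q=\alpha/n$ forces the output exponent of this $I_\alpha$ to equal $p_l'$, so that pairing against $h_l\in L^{p_l}$ through the weight $\omega_l$ closes the estimate once the corresponding vector weight is checked to lie in the appropriate $A$ class --- this final verification being precisely the kind of tedious weight bookkeeping flagged in the introduction.
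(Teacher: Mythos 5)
Your proposal is correct and follows essentially the same route as the paper: verify $\Pi_l(g,h_1,\dots,h_m)\in L^1$ term by term (using H\"older with the $\mu_{\vec\omega}^{\pm 1/q}$ splitting, Moen's weighted bound for $I_\alpha$, and the exponent/weight relations $\frac{1}{p_l'}=\sum_{j\neq l}\frac{1}{p_j}+\frac{1}{q'}-\frac{\alpha}{n}$), pass $b$ across via $(I_\alpha^*)_l=I_\alpha$ to get $\int b\,\Pi_l=\int g\,[b,I_\alpha]_l(\vec h)$, bound this by the weighted commutator estimate, and conclude by $H^1$--$BMO$ duality. The only detail worth adding explicitly (which the paper records and your write-up leaves implicit) is that $\int_{\mathbb{R}^n}\Pi_l(g,h_1,\dots,h_m)\,dx=0$, which is needed for the $BMO$ pairing to be well defined modulo constants.
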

\begin{proof}
 Note that for any $g\in L^{q^{\prime}}(\mu^{1-q^{\prime}}_{\vec{\omega}})$ and $h_{i}\in L^{p_{i}}(\omega_i^{p_i}), i=1,2,\dots ,m$, we have
\begin{eqnarray*}
\begin{aligned}
\int_{\mathbb{R}^n}|g(x)I_{\alpha}(h_{1},\cdots,h_{m})(x)|dx&=\int_{\mathbb{R}^n}|g(x)|\mu_{\vec{\omega}}(x)^{-\frac{1}{q}}\dot |I_{\alpha}(h_{1},\cdots,h_{m})(x)|\mu_{\vec{\omega}}(x)^{\frac{1}{q}}dx\\
&\leq \|g\|_{L^{q^{\prime}}(\mu^{1-q^{\prime}}_{\vec{\omega}})}\|I_{\alpha}(h_{1},\cdots,h_{m})\|_{L^{q}(\mu_{\vec{\omega}})}\\
&\leq C\|g\|_{L^{q^{\prime}}(\mu^{1-q^{\prime}}_{\vec{\omega}})}\prod_{i=1}^{m}\|h_{i}\|_{L^{p_{i}}(\omega_i^{p_i})}.
\end{aligned}
\end{eqnarray*}
On the other hand, the directly calculation gives us that
\begin{equation*}
\frac{1}{p'_{l}}=\sum_{j\neq l}\frac{1}{p_{j}}+\frac{1}{q'}-\frac{\alpha}{n} \qquad \text{and}\qquad
\omega_{l}^{-p_{l}}=\prod_{j\neq l}\omega_{j}^{p_{l}}\cdot \mu_{\omega}^{-p_{l}/q},
\end{equation*}
it follows that
$$I_{\alpha}: L^{p_{1}}(\omega_{1}^{p_{1}})\times\cdots\times L^{p_{l-1}}(\omega_{l-1}^{p_{l-1}})\times L^{q^{\prime}}(\mu^{1-q^{\prime}}_{\vec{\omega}})
\times L^{p_{l+1}}(\omega_{l+1}^{p_{l+1}})\times\cdots\times L^{p_{m}}(\omega_{m}^{p_{m}})\rightarrow L^{p'_{l}}(\omega^{-p_{l}}).$$
Which implies that ${\textstyle \prod_{l}}(g,h_1,\dots,h_m)(x)\in L^1(\mathbb{R}^n)$ by H\"{o}lder duality. Moreover,
$$\int_{\mathbb{R}^n} {\textstyle \prod_{l}}(g,h_1,\dots,h_m)(x)dx=0.$$
Hence, for $b\in BMO$, 		
   	\begin{equation*}
		\begin{split}
		\left|\int_{\mathbb{R}^n} b(x){\textstyle \prod_{l}}(g,h_1,\dots,h_m)(x)dx \right|
		&=\left|\int_{\mathbb{R}^n} g(x)\left [b,I_{\alpha} \right ]_l (h_1,\dots,h_m)(x)dx
		\right|\\
		&=\left|\int_{\mathbb{R}^n}g(x)\mu_{\vec{\omega}}(x)^{-\frac{1}{q}}\left[b,I_\alpha\right]_l(h_1,\dots,h_m)(x)\mu_{\vec{\omega}}(x)^{\frac{1}{q}}dx\right| \\
		&\le\left \|g\right\|_{L^{q^{\prime}}(\mu_{\vec{\omega}}^{1-q^{\prime}})}\cdot \left\|[b,I_\alpha]_l(h_1,\cdots,h_m)\right\|_{L^{q}(\mu_{\vec{\omega}})}\\
		&\le C\left \| h_1 \right \|_{L^{p_1}(\omega_1^{p_1})}\dots \left \| h_m \right \|_{L^{p_m}(\omega_m^{p_m})}\left \| g \right \|_{L^{q^{\prime}}(\mu^{1-q^{\prime}}_{\vec{\omega}})}\left \| b\right\|_{BMO}.
		\end{split}
      \end{equation*}
Therefore, ${\textstyle \prod_{l}}(g,h_1,\dots,h_m)$ is in $H^{1}(\mathbb{R}^n)$ with $$\left\|\Pi_{l}\left(g, h_{1}, \ldots, h_{m}\right)\right\|_{H^{1}\left(\mathbb{R}^{n}\right)} \leq C\|g\|_{L^{q^{\prime}}(\mu^{1-q^{\prime}}_{\vec{\omega}})}\left\|h_{1}\right\|_{L^{p_{1}}(\omega_1^{p_1})} \cdots\left\|h_{m}\right\|_{L^{p_{m}}(\omega_m^{p_m})}.$$
    The proof of Lemma \ref{lem-main2} is completed.
\end{proof}

     \begin{lemma} \label{lem-main3}
  Let $1\le l\le m$, $0<\alpha<mn$, $1< p_{1},\dots,p_{m},q<\infty$, $\frac{1}{p_1}+\dots+\frac{1}{p_m} -\frac{1}{q}=\frac{\alpha}{n}$ and $\vec{\omega}\in A_{\vec{P},q}$.
  For every $H^{1}(\mathbb{R}^n)$-atom $a(x)$,
     there exists $g\in L^{q^{\prime}}(\mu^{1-q^{\prime}}_{\vec{\omega}})$ and $h_{i}\in L^{p_{i}}(\omega_i^{p_i}), i=1,2,\dots ,m$ and a large positive number $M$(depending only on $\varepsilon$) such that:
     $$ \left \| a- {\textstyle \prod_{l}}(g,h_1,h_2,\dots ,h_m)\right \|_{H^1(\mathbb{R}^n)}<\varepsilon $$ and that
     $\|g\|_{L^{q^{\prime}}(\mu^{1-q^{\prime}}_{\vec{\omega}})}\left\|h_{1}\right\|_{L^{p_{1}}(\omega_1^{p_1})}\cdots\left\|h_{m}\right\|_{L^{p_{m}}(\omega_m^{p_m})}\le CM^{mn(1+L)-\alpha}$.
     \end{lemma}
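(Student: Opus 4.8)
The plan is to prove Lemma \ref{lem-main3} by an explicit localized construction: given an atom $a$ I would manufacture a single term ${\textstyle \prod_l}(g,h_1,\dots,h_m)$ that reproduces $a$ up to an $H^1$-error that can be made arbitrarily small, while keeping quantitative control of the weighted norms. Since $H^1(\mathbb{R}^n)$ is generated by bounded atoms, I may assume $a$ is a $(1,\infty,0)$-atom supported on a ball $B=B(x_0,r)$, so that $\|a\|_{L^\infty}\le|B|^{-1}$ and $\int_{\mathbb{R}^n}a=0$. Fix a second ball $\tilde B=B(y_0,r)$ with $|x_0-y_0|=Mr$, where $M$ is a large parameter to be chosen at the very end in terms of $\varepsilon$, and set
\begin{align*}
h_l:=a,\qquad h_j:=\chi_{B}\ (j\neq l),\qquad g:=\kappa\,\chi_{\tilde B},
\end{align*}
where $\kappa>0$ is normalised so that $I_\alpha(h_1,\dots,h_{l-1},g,h_{l+1},\dots,h_m)(x_0)=1$; a direct estimate of the kernel on these far-apart balls gives $\kappa\approx r^{-\alpha}M^{mn-\alpha}$.

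Because $(I_\alpha^*)_l=I_\alpha$, this choice yields the identity
\begin{align*}
a-{\textstyle \prod_l}(g,h_1,\dots,h_m)=a\big(1-I_\alpha(h_1,\dots,h_{l-1},g,h_{l+1},\dots,h_m)\big)+g\,I_\alpha(h_1,\dots,h_m),
\end{align*}
whose two summands are supported on $B$ and $\tilde B$ respectively, and whose integral vanishes since $\int a=0$ and, by Lemma \ref{lem-main2}, $\int{\textstyle \prod_l}(g,h_1,\dots,h_m)=0$. I would then establish the two kernel estimates that make each summand small. For $x\in B$ the $l$-th variable $y_l\in\tilde B$ sits at distance $\approx Mr$, so the kernel of $I_\alpha(\dots,g,\dots)$ varies across $B$ only by a relative factor $\lesssim1/M$, giving $|1-I_\alpha(\dots,g,\dots)(x)|\lesssim1/M$ on $B$; for $x\in\tilde B$ the cancellation $\int a=0$ improves the crude size estimate of $I_\alpha(h_1,\dots,h_m)(x)$ by an extra factor $1/M$. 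Using the scaling relation $\tfrac{1}{p_1}+\dots+\tfrac1{p_m}-\tfrac1q=\tfrac\alpha n$ one checks that each summand is dominated on its ball by a function of $L^q$-norm $\lesssim M^{-1}r^{-n/q'}$. Applying Lemma \ref{lem-main1} to $M\big(a-{\textstyle \prod_l}(g,h_1,\dots,h_m)\big)$ — whose data then satisfy the hypotheses with constant $\approx1$ and separation $|x_0-y_0|=Mr\ge4r$ — gives $\|a-{\textstyle \prod_l}(g,h_1,\dots,h_m)\|_{H^1(\mathbb{R}^n)}\lesssim M^{-1}\log M$, a bound uniform in $x_0$ and $r$; choosing $M=M(\varepsilon)$ large makes it $<\varepsilon$.

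It remains to bound the weighted norm product. Writing out the three norms gives
\begin{align*}
\|g\|_{L^{q'}(\mu^{1-q'}_{\vec\omega})}\prod_{i=1}^m\|h_i\|_{L^{p_i}(\omega_i^{p_i})}\lesssim\kappa\,|B|^{-1}\Big(\int_{\tilde B}\mu^{1-q'}_{\vec\omega}\Big)^{1/q'}\prod_{i=1}^m\Big(\int_{B}\omega_i^{p_i}\Big)^{1/p_i}.
\end{align*}
Inserting $\kappa\approx r^{-\alpha}M^{mn-\alpha}$ and using the scaling relation, all powers of $r$ cancel and one is left with $M^{mn-\alpha}$ times a scale-invariant weighted expression. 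The essential difficulty is to estimate this expression \emph{without} assuming that the individual source weights $\omega_i^{p_i}$ lie in $A_\infty$. The only $A_\infty$ information available is Moen's, namely $\mu_{\vec\omega}\in A_{mq}$ and $\omega_i^{-p_i'}\in A_{mp_i'}$. I would therefore use Hölder's inequality, the identity $\mu^{1-q'}_{\vec\omega}=\prod_{i=1}^m\omega_i^{-q'}$, and power-mean (Jensen) inequalities to re-express the weighted factors in terms of the averages $\tfrac1{|B|}\int_B\mu_{\vec\omega}$ and $\tfrac1{|B|}\int_B\omega_i^{-p_i'}$ that are controlled by the $A_{\vec P,q}$ constant. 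Since $\mu_{\vec\omega}$ and each $\omega_i^{-p_i'}$ are $A_\infty$ weights, the reverse doubling estimate \eqref{weight-eq2} transfers the factors supported on the distant ball $\tilde B$ back onto $B$, each transfer costing a power $M^{nL}$; the $m$ resulting factors account for the $M^{mnL}$ growth. The $A_{\vec P,q}$ condition then collapses the remaining $B$-averages into a constant multiple of a power of $[\vec\omega]_{A_{\vec P,q}}$, and collecting the powers of $M$ yields $\le CM^{mn-\alpha}\cdot M^{mnL}=CM^{mn(1+L)-\alpha}$.

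The main obstacle is precisely this final weighted estimate. The construction and the $H^1$ approximation follow the Coifman--Rochberg--Weiss scheme adapted to $I_\alpha$ through Lemma \ref{lem-main1}, and involve only routine (if delicate) kernel estimates. By contrast, controlling $\|g\|_{L^{q'}(\mu^{1-q'}_{\vec\omega})}\prod_i\|h_i\|_{L^{p_i}(\omega_i^{p_i})}$ when no individual weight $\omega_i^{p_i}$ is assumed to be doubling forces every use of reverse doubling to be routed through the weights $\mu_{\vec\omega}$ and $\omega_i^{-p_i'}$ furnished by Moen's theorem, and making the exponents match in this reduction is where the bulk of the computation lies. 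Finally, the trade-off recorded in the statement — a smaller $\varepsilon$ demanding a larger $M$ and hence a larger, but still finite, norm product of size $M^{mn(1+L)-\alpha}$ — is exactly the mechanism that is later summed as a geometric series in the proof of Theorem \ref{main1}.
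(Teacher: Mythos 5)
Your overall scheme --- approximate the atom $a$ by a single term $\prod_l(g,h_1,\dots,h_m)$ built on two balls separated by $Mr$, verify the size and cancellation hypotheses of Lemma \ref{lem-main1}, and conclude $\|a-\prod_l(\cdots)\|_{H^1}\lesssim M^{-1}\log M$ --- is exactly the paper's, and that half of your argument is sound. The gap is in your choice of test functions and, consequently, in the weighted norm bound. You take $h_l:=a$ and $h_j:=\chi_B$ for $j\neq l$, so that
\begin{equation*}
\|h_j\|_{L^{p_j}(\omega_j^{p_j})}=\Big(\int_B\omega_j^{p_j}\Big)^{1/p_j},\qquad
\|h_l\|_{L^{p_l}(\omega_l^{p_l})}\le r^{-n}\Big(\int_B\omega_l^{p_l}\Big)^{1/p_l}.
\end{equation*}
These involve \emph{positive} powers of the individual weights $\omega_i$, and the hypothesis $\vec\omega\in A_{\vec P,q}$ controls only $\int_Q\prod_i\omega_i^q$ and $\int_Q\omega_i^{-p_i'}$ (equivalently, via Moen, $\mu_{\vec\omega}\in A_{mq}$ and $\omega_i^{-p_i'}\in A_{mp_i'}$). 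There is no guarantee that $\omega_i^{p_i}$ is even locally integrable, and your proposed repair --- Hölder and power-mean inequalities reducing to averages of $\mu_{\vec\omega}$ and $\omega_i^{-p_i'}$ --- runs the wrong way: Jensen gives the \emph{lower} bound $\frac{1}{|B|}\int_B\omega_i^{p_i}\ge\big(\frac{1}{|B|}\int_B\omega_i^{-p_i'}\big)^{-p_i/p_i'}$, and an upper bound of this type is a reverse Hölder inequality requiring precisely the individual $A_\infty$ assumption the lemma is designed to avoid. So the final estimate $\le CM^{mn(1+L)-\alpha}$ does not follow from your construction.

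The paper sidesteps this by building the weights into the test functions: it places each $h_j$ on its own ball $B_j=B(y_j,r)$ and takes $h_j:=\omega_j^{-p_j'}\chi_{B_j}$, $h_l\propto a\,\omega_l^{-p_l'}$, and $g\propto\mu_{\vec\omega}^{1/q}\chi_{B_l}$ (suitably normalized). With these choices every weighted norm collapses to a quantity the hypothesis does control, e.g. $\|h_j\|_{L^{p_j}(\omega_j^{p_j})}=\big(\int_{B_j}\omega_j^{-p_j'}\big)^{1/p_j}=\mu_j(B_j)^{1/p_j}$ and $\|g\|_{L^{q'}(\mu_{\vec\omega}^{1-q'})}=|B_l|\,\mu_{\vec\omega}(B_l)^{-1/q}$; the product is then bounded by $(Mr)^{mn-\alpha}\mu_{\vec\omega}(B_l)^{-1/q}\prod_j\mu_j(B_j)^{-1/p_j'}$, the reverse-doubling estimate \eqref{weight-eq2} for the $A_\infty$ weights $\mu_j$ transfers each $B_j$ to $B_l$ at cost $M^{nL}$, and the $A_{\vec P,q}$ condition on $B_l$ cancels the powers of $r$ and yields $CM^{mn(1+L)-\alpha}$. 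If you adopt this choice of $g,h_1,\dots,h_m$, your kernel estimates and the application of Lemma \ref{lem-main1} go through essentially unchanged (with $g$ no longer bounded, the $L^q$ rather than $L^\infty$ form of the hypothesis in Lemma \ref{lem-main1} is what you need for the term supported on $B_l$).
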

\begin{proof}
Let $a(x)$ be an $H^{1}(\mathbb{R}^n)$-atom,supported in $B(x_0,r)$, satisfying that
    $$ \int_{\mathbb{R}^n}a(x)dx=0\qquad and \qquad \left \| a \right \|_{L^{\infty}(\mathbb{R}^n)}\le |B(x_{0},r)|^{-1}.$$
    Fix $1\le l \le m$. Now select $y_{l}\in \mathbb{R}^n$ so that $y_{l,i}-x_{0,i}=\frac{Mr}{\sqrt{n}}$, where $x_{0,i}$(reps.$y_{l,i}$) is the $i$-th coordinate of $x_{0}$(reps.$y_{l}$) for
    $i=1,2,\dots ,n$. Note that for this $y_{l}$, we have $\left | x_0-y_l \right | =Mr$. Similar to the relation of $x_0$ and $y_l$, we choose  $y_1$ such that $y_0$ and $y_1$
    satisfies the same relationship as $x_0$ and $y_l$ do.  Then by induction we choose $y_2,\dots ,y_{l-1},y_{l+1},\dots ,y_m$.
   We write $B_i=B(y_i,r)$, $\mu_i=\omega_i^{-p_i^{\prime}}\in A_{mp'_{i}}\subset A_{\infty}$ and set
 	\begin{equation*}
 	\begin{split}
 		&g(x):=\Big(\frac{|B_{l}|\mu_{\vec{\omega}}(x)}{\mu_{\vec{\omega}}(B_{l})}\Big)^{1/q}\chi_{B_l}(x),\\
        &g(B_{l}):=\int_{B(y_l,r)}g(z_{l})dz_{l},\\
 		&h_j(x):=\mu_{j}(x)\chi_{B_j}(x),\qquad j\ne l,\\
 		& h_l(x)=\frac{a(x)}{(I^*_{\alpha})_l(h_1,\dots,h_{l-1},g,h_{l+1},\dots,h_m)(x_0)}\frac{g(B_{l})\mu_{l}(x)}{\mu_{l}(B_{l})}\chi_{B_l}(x).
 	\end{split}
    \end{equation*}

It follows from the specific choice of the functions
    $h_1,\dots ,h_{l-1},g,h_{l+1},\dots,h_m$ that
    \begin{align*}
    	&|(I^*_{\alpha})_{l}(h_1,\dots,h_{l-1},g,h_{l+1},\dots,h_m)(x_0)|\\
    	&=\int_{B_1\times\cdots\times B_m}\frac{g(z_{l})\prod_{j\ne l}\mu_{j}(z_j)}{(|x_0-z_1|+\cdots+|x_0-z_m|)^{mn-\alpha}}dz_1\cdots dz_m\\
    	&\ge C(Mr)^{\alpha-mn}g(B_{l})\prod_{j\ne l}\mu_j(B_j).
    \end{align*}
The definitions of the functions $g(x)$ and $h_j(x)$ give us that $supp\ g=B(y_l,r)$ and $supp\ h_j=B(y_j,r)$. Moreover,
    \begin{align*}
    \|g\|_{L^{q^{}\prime}(\mu_{\vec{\omega}}^{1-q^{\prime}})}=\Big(\frac{|B_l|}{\mu_{\vec{\omega}}(B_l)}\Big)^{1/q}|B_l|^{1/q^{\prime}}=\frac{|B_l|}{\mu_{\vec{\omega}}(B_l)^{1/q}}
    \end{align*}
    and
    \begin{align*}
    \|h_j\|_{L^{p_j}(\omega_j^{p_j})}\lesssim \mu_j(B_j)^{\frac{1}{p_j}}
    \end{align*}
    for $i=1,\dots,l-1,l+1,\dots,m$. Also,
    \begin{align*}
    	\left\|h_{l}\right\|_{L^{p_{l}}(\omega_l^{p_l})}
    	& = \frac{1}{\left|(I^{*}_{\alpha})_{l}\left(h_{1}, \ldots, h_{l-1}, g, h_{l+1}, \ldots, h_{m}\right)\left(x_{0}\right)\right|}\frac{g(B_{l})}{\mu_{l}(B_{l})}\|a\|_{L^{p_{l}}(\mu_{l})} \\
    	&\lesssim Cr^{-n}(Mr)^{mn-\alpha}\mu_{l}(B_{l})^{-1/p'_{l}}\prod_{j\neq l}\mu_j(B_j)^{-1}. \tag{2.4}
    \end{align*}
Combining the estimates above, we arrive at
     \begin{align*}
    	\|g\|_{L^{q^{\prime}}(\mu^{1-q^{\prime}}_{\vec{\omega}})}\left\|h_{1}\right\|_{L^{p_{1}}(\omega_1^{p_1})}\cdots\left\|h_{m}\right\|_{L^{p_{m}(\omega_m^{p_m})}}
    	&\lesssim (Mr)^{mn-\alpha}\mu_{\vec{\omega}}(B_l)^{-1/q}\prod_{j=1}^{m}\mu_j(B_j)^{-1/p_j^{\prime}}.
    \end{align*}
From the inequality \eqref{weight-eq2}, we have
     $${\mu_j(B_j)}\lesssim {\mu_j((M+1)B_l)}\lesssim M^{nL}\mu_j(B_l).$$
From the fact that
     $$1\leq \left (\frac{1}{|B|}\int_{B}\prod_{i=1}^{m}\omega_i^{q}\right) \prod_{i=1}^{m} \left(\frac{1}{|B|}\int_{B}\omega_{i}^{-p_i^\prime}\right)^{\frac{q}{p_i^{\prime}}},$$
it is easy to see that
    $$\|g\|_{L^{q^{\prime}}(\mu^{1-q^{\prime}}_{\vec{\omega}})}\left\|h_{1}\right\|_{L^{p_{1}}(\omega_1^{p_1})}\cdots\left\|h_{m}\right\|_{L^{p_{m}(\omega_m^{p_m})}}
    \lesssim M^{mn(1+L)-\alpha}.$$
    Next, we have
	\begin{equation*}
	\begin{split}  
		&a(x)- {\textstyle \prod_{l}}(g,h_1,h_2,\dots ,h_m)(x)\\
		&=a(x)-\left [h_l(I^*_{\alpha})_l(h_1,\dots,h_{l-1},g,h_{l+1},\dots,h_m)-gI_{\alpha}(h_1,\dots,h_m)(x)\right ] \\
		&=a(x)\frac{(I^*_{\alpha})_l(h_1,\dots,h_{l-1},g,h_{l+1},\dots,h_m)(x_0)-(I^*_{\alpha})_l(h_1,\dots,h_{l-1},g,h_{l+1},\dots,h_m)(x)}{(I^*_{\alpha})_l(h_1,\dots,h_{l-1},g,h_{l+1},\dots,h_m)(x_0)} \\
		&\qquad+g(x){I_{\alpha}(h_1,\dots,h_m)(x})\\
		&=:W_1(x)+W_2(x)\\
	\end{split}
    \end{equation*}
It is obvious that $W_1(x)$ is supported on $B(x_0,r)$ and $W_2(x)$ is supported on $B(y_0,r)$.

 We first estimate $W_1(x)$. For $x\in B(x_0,r)$,
	we have
   \begin{align*}
   	&\left | W_1(x) \right | \\
   	&=\left|a(x) \right| \frac{\left|(I^*_{\alpha})_l(h_1,\dots,h_{l-1},g,h_{l+1},\dots,h_m)(x_0)-(I^*_{\alpha})_l(h_1,\dots,h_{l-1},g,h_{l+1},\dots,h_m)(x)\right|}
   	{\left|(I^*_{\alpha})_l(h_1,\dots,h_{l-1},g,h_{l+1},\dots,h_m)(x_0)\right|} \\
   	&\lesssim\frac{\|a\|_{L^{\infty}}}{(Mr)^{\alpha-mn}g(B_{l})\prod_{j\ne l}\mu_j(B_j)}\\
   &\qquad \times\int_{\prod_{j=1}^{m}B(y_j,r)}\frac{|x-x_0|g(z_l)\prod_{j\ne l}h_j(z_j)}{(\sum_{i=1,i\ne l}^{m}|z_l-z_i|+|z_l-x_0)^{mn-\alpha+1}}dz_1\cdots dz_m\\
   	&\lesssim\frac{r^{-n}}{(Mr)^{\alpha-mn}g(B_{l})\prod_{j\ne l}\mu_j(B_j)}
   \frac{rg(B_{l})\prod_{j\ne l}\mu_j(B_j)}{(Mr)^{mn-\alpha+1}}\\
   	&\lesssim \frac{1}{M r^n}.
   \end{align*}

    Next we estimate $W_2(x)$. From the definition of $g(x)$ and $h_i(x)$, we have
  	\begin{align*}
		&|I_{\alpha}(h_1,\cdots,h_m)(x)|\\
		&=\frac{1}{\left|(I^*_{\alpha})_l(h_1,\dots,h_{l-1},g,h_{l+1},\dots,h_m)(x_0)\right|} \\
		&\qquad \times\Big|\int_{\prod_{j\ne l}B(y_j,r)\times B(x_0,r)} \big(K(z_1,\dots,z_{l-1},x_0,z_{l+1},\dots,z_m)(x_0)\\
		&\qquad\qquad-K(z_1,\dots,z_{l-1},x,z_{l+1},\dots,z_m)(x_0)\big)a(z_l)\prod_{j\neq l}h_{j}(z_{j})dz_1\cdots dz_m\Big|\\
		&\lesssim\frac{\|a\|_{L^{\infty}}}{(Mr)^{\alpha-mn}\prod_{i=1}^{m}\mu_j(B_j)}\\
		&\qquad \times\int_{\prod_{j\ne l}B(y_j,r)\times B(x_0,r)}\frac{|x-x_0|\mu_{l}(z_{l})\prod_{j\ne l}|h_j(z_j)|}{(\sum_{i=1,i\ne l}^{m}|z_l-z_i|+|z_l-x_0)^{mn-\alpha+1}}dz_1\cdots dz_m\\
		&\lesssim \frac{1}{M r^n},
    \end{align*}
where in the second equality we use the cancelllation property of the atom $a(z_l)$. It follows that
	$$|W_2(x)|\lesssim\frac{g(x)}{M r^n}\chi_{B(y_l,r)}(x).$$
The estimates of $W_1(x)$ and $W_2(x)$ imply that
	\begin{align}\label{size}
	\left | a(x)- {\textstyle \prod_{l}(g,h_1,\dots ,h_m)(x)}\right|\lesssim \frac{1}{M r^n}\chi_{B(x_{0},r)}(x)+\frac{g(x)}{M r^n}\chi_{B(x_{l},r)}(x).
	\end{align}
Notice that
	\begin{align}\label{can}
	\int_{\mathbb{R}^n}\left [a(x)- {\textstyle \prod_{l}(g,h_1,\dots ,h_m)(x)} \right]dx=0,
	\end{align}
	because the atom $a(x)$ has cancellation property and the second integral equals 0 just by the definitions of $ {\textstyle \prod_{l}}$.
    Then the inequality \eqref{size} and the cancellation \eqref{can}, together with Lemma \ref{lem-main1}, show that
    $$ \left \| a(x)- {\textstyle \prod_{l}(g,h_1,\dots ,h_m)(x)} \right \|_{H^1(\mathbb{R}^n)}\le C\frac{\log M}{M}.$$
     For $M$ sufficiently large such that
     $$\frac{C\log M}{M}<\epsilon.$$
 Thus, the result follows from here.
\end{proof}

    With this approximation result above, we can give the proof of the main Theorem \ref{main1}.

    \vspace{0.5cm}

    \noindent
    \textbf{Proof of Theorem \ref{main1}.}  By Lemma \ref{lem-main2}, it is obvious that
    $$\left \|{\textstyle \prod_{l}(g,h_1,\dots ,h_m)(x)} \right \|_{H^1(\mathbb{R}^n)}\le C\left \| g \right \|_{L^{q^{\prime}}(\mu^{1-q^{\prime}}_{\vec{\omega}})}\left \|h_1\right \|_{L^{p_1}(\omega_1^{p_1})} \cdots \left \|h_m\right \|_{L^{p_m}(\omega_m^{p_m})}.$$
    It is immediate that for any representation of $f$ as in \eqref{f=}, i.e., $$f=\sum_{k=1}^{\infty}\sum_{s=1}^{\infty}\lambda_s^k{\textstyle\prod_{l}(g_s^k,h_{s,1}^k,\dots,h_{s,m}^k)(x)} ,$$
    with
         \begin{align*}
      \|f\|_{H^{1}\left(\mathbb{R}^{n}\right)}\leq C
      \inf\left\{\sum_{k=1}^{\infty}\sum_{s=1}^{\infty}\left|\lambda_{s}^{k}\right|
      \left\|g_{s}^{k}\right\|_{L^{q^{\prime}}(\mu^{1-q^{\prime}}_{\vec{\omega}})}
      \left\|h_{s,1}^{k}\right\|_{L^{p_{1}}(\omega_1^{p_1})}\cdots\left\|h_{s,m}^{k}\right\|_{L^{p_{m}}(\omega_m^{p_m})}\right\},
      \end{align*}
where the infimum above is taken over all possible representations of $f$ that satisfy \eqref{f=}.

    Next, we will show that the other inequality holds and that it is possible to obtain such a decomposition for any $f\in H^1(\mathbb{R}^n)$. Applying the atomic decomposition, for any $f\in H^1(\mathbb{R}^n)$ we can find a sequence $\left\{\lambda_s^1\right\}\in \ell^1$ and sequence of $H^1(\mathbb{R}^n)$-atom
    $\left \{a_{s}^{1} \right \}$ so that $f=\sum_{s=1}^{\infty}\lambda _s^1a_s^1$ and $\sum_{s=1}^{\infty}|\lambda_s^1|\le C\left \| f \right\|_{H^1(\mathbb{R}^n)} .$

   Fix $\varepsilon >0$ so that $C\varepsilon <1$. We apply Lemma \ref{lem-main3} to each atom $a_s^1$, then there exists $g_s^1\in L^{q^{\prime}}(\mu^{1-q^{\prime}}_{\vec{\omega}}),h_{s,1}^1\in L^{p_1}(\omega_1^{p_1}),\cdots ,h_{s,m}^1\in L^{p_m}(\omega_m^{p_m})$ with
    $$\left \| a_s^1- {\textstyle \prod_{j,l}}(g_s^1,h_{s,1}^1,\cdot,h_{s,m}^1)\right\|_{H^1({\mathbb{R}^n)}} <\varepsilon ,\qquad\forall s$$ and
    $$\left \| g_s^1 \right \|_{L^{q^\prime}(\mu^{1-q^{\prime}}_{\vec{\omega}})} \left \| h_1 \right \|_{L^{p_1}(\omega_1^{p_1})}\cdots \left \| h_m \right \|_{L^{p_m}(\omega_m^{p_m})}\le C(\varepsilon, L,\alpha),$$
    where $C(\varepsilon, L,\alpha)=CM^{mn(1+L)-\alpha}$ is a constant depending on $\varepsilon$, $L$ and $\alpha$. Notice that
   	\begin{equation*}
    \begin{split}
   	& f=\sum_{s=1}^{\infty}\lambda _s^1a_s^1={\textstyle \sum_{s=1}^{\infty}}\lambda_s^1{\textstyle\prod_{l}(g_s^1,h_{s,1}^1,\dots,h_{s,m}^1)(x)}
    	+{\textstyle \sum_{s=1}^{\infty}}\lambda_s^1(a_s^1-{\textstyle\prod_{l}(g_s^1,h_{s,1}^1,\dots,h_{s,m}^1)})\\
  	&=: M_1+E_1.
   	\end{split}
    \end{equation*}
 Moreover,
    $$\|E_1\|_{H^1(\mathbb{R}^n)}\le{\textstyle \sum_{s=1}^{\infty}}|\lambda_s^1|\ \|a_s^1-{\textstyle\prod_{l}(g_s^1,h_{s,1}^1,\dots,h_{s,m}^1)}\|_{H^1(\mathbb{R}^n)}\le \varepsilon {\textstyle \sum_{s=1}^{\infty}}|\lambda_s^1|\le \varepsilon C|f\|_{H^1(\mathbb{R}^n)}.$$

     In addition, since $E_1\in H^1(\mathbb{R}^n)$, we can also find a sequence $\left\{\lambda_s^2\right\}\in l^1$ and sequence of $H^1(\mathbb{R}^n)$-atom $\left \{a_{s}^{2} \right \}$ so that $E_1=\sum_{s=1}^{\infty}\lambda _s^2a_s^2$. and $$\sum_{s=1}^{\infty}|\lambda_s^2|\le C\left \| E_1 \right\|_{H^1(\mathbb{R}^n)} \le \varepsilon C^2\left \| f \right\|_{H^1(\mathbb{R}^n)}.$$

    Again, applying Lemma \ref{lem-main3} to each atom $a_s^2$, there exists $g_s^2\in L^{q^{\prime}}(\mu^{1-q^{\prime}}_{\vec{\omega}}),h_{s,1}^2\in L^{p_1}(\omega_1^{p_1}),\cdots ,h_{s,m}^2\in L^{p_m}(\omega_m^{p_m})$ with
    $$\left \| a_s^2- {\textstyle \prod_{j,l}}(g_s^2,h_{s,1}^2,\cdot,h_{s,m}^2)\right\|_{H^1({\mathbb{R}^n)}} <\varepsilon ,\qquad\forall s$$
    We then have that:
   	\begin{equation*}
   	\begin{split}
    & E_1=\sum_{s=1}^{\infty}\lambda _s^2a_s^2={\textstyle \sum_{s=1}^{\infty}}\lambda_s^2{\textstyle\prod_{l}(g_s^2,h_{s,1}^2,\dots,h_{s,m}^2)(x)}+{\textstyle \sum_{s=1}^{\infty}}\lambda_s^2(a_s^2-{\textstyle\prod_{l}(g_s^2,h_{s,1}^2,\dots,h_{s,m}^2)})\\
   	&=: M_2+E_2.
   	\end{split}
    \end{equation*}
	As before, observe that
	\begin{equation*}
	\begin{split}
	\|E_2\|_{H^1(\mathbb{R}^n)} &\le{\textstyle \sum_{s=1}^{\infty}}|\lambda_s^2|\ \|a_s^2-{\textstyle\prod_{l}(g_s^2,h_{s,1}^2,\dots,h_{s,m}^2)}\|_{H^1(\mathbb{R}^n)}\\
&\le \varepsilon{\textstyle\sum_{s=1}^{\infty}}|\lambda_s^2|
	\le (\varepsilon C)^2\|f\|_{H^1(\mathbb{R}^n)}.
	\end{split}
	\end{equation*}
   This gives us that
   	\begin{equation*}
    \begin{split}
   	& f=\sum_{s=1}^{\infty}\lambda _s^1a_s^1={\textstyle \sum_{s=1}^{\infty}}\lambda_s^1{\textstyle\prod_{l}(g_s^1,h_{s,1}^1,\dots,h_{s,m}^1)(x)}
    		+{\textstyle \sum_{s=1}^{\infty}}\lambda_s^1(a_s^1-{\textstyle\prod_{l}(g_s^1,h_{s,1}^1,\dots,h_{s,m}^1)})\\
   	&= M_1+E_1= M_1+M_2+E_2\\
    & =\sum_{k=1}^{2}\sum_{s=1}^{\infty }\lambda_s^k{\textstyle\prod_{l}(g_s^k,h_{s,1}^k,\dots,h_{s,m}^k)}+E_2.
   	\end{split}
    \end{equation*}

Continuing this process indefinitely, we obtain for each $1\le k\le K$ produces functions $g_s^k\in L^{q^{\prime}}(\mu^{1-q^{\prime}}_{\vec{\omega}}),h_{s,1}^k\in L^{p_1}(\omega_1^{p_1}),\cdots ,h_{s,m}^k\in L^{p_m}(\omega_m^{p_m})$ with $$\left \| g_s^k \right \|_{L^{q^\prime}(\mu_{\vec{\omega}})} \left \| h_{s,1}^k \right \|_{L^{p_1}(\omega_1^{p_1})}\cdots \left \| h_{s,m}^k \right \|_{L^{p_m}(\omega_m^{p_m})}\le C(\varepsilon, L,\alpha), \quad \forall s,$$ sequences $\left\{\lambda_s^k\right\}\in l^1$ with $\|\lambda_s^k\|_{l_1}\le \varepsilon^{k-1} C^k\left \| f \right\|_{H^1(\mathbb{R}^n)}$, and a function $E_K\in H^1(\mathbb{R}^n)$ with $$\|E_K\|_{H^1(\mathbb{R}^n)}\le (C\varepsilon)^K\left \| f \right\|_{H^1(\mathbb{R}^n)}.$$
The same argument above shows that
	$$f=\sum_{k=1}^{K}\sum_{s=1}^{\infty }\lambda_s^k{\textstyle\prod_{l}(g_s^k,h_{s,1}^k,\dots,h_{s,m}^k)}+E_k.$$
	Letting $K\to \infty $ gives the desired decomposition of $$f=\sum_{k=1}^{\infty}\sum_{s=1}^{\infty}\lambda_s^k{\textstyle\prod_{l}(g_s^k,h_{s,1}^k,\dots,h_{s,m}^k)}.$$ 	
We conclude that
$$\sum_{k=1}^{\infty}\sum_{s=1}^{\infty}|\lambda_s^k|\le \sum_{k=1}^{\infty}\varepsilon ^{-1}(C\varepsilon)^K\|f\|_{H^1(\mathbb{R}^n)}=\frac{C}{1-\varepsilon C}\|f\|_{H^1(\mathbb{R}^n)}.$$
Thus, we have completed the proof of Theorem \ref{main1}. \qed

\vspace{0.3cm}

	Finally, we dispense with the proof of Theorem \ref{main2}.
	
\vspace{0.5cm}

    \noindent
	\textbf{Proof of Theorem \ref{main2}.}  The upper bound in this theorem is contained in \cite{CW2013}. For the lower bound, suppose that $f\in{H^1(\mathbb{R}^n)}$, using the weak factorization in Theorem \ref{main1} and the weighted boundedness of $[b,I_{\alpha}]_l$, we obtain
	\begin{equation*}
	\begin{split}
	& \left \langle b,f \right \rangle_{L^2(\mathbb{R}^n)}=\sum_{k=1}^{\infty }\sum_{s=1}^{\infty }\lambda_s^k \left \langle b,{\textstyle\prod_{l}(g_s^k,h_{s,1}^k,\dots,h_{s,m}^k)} \right \rangle_{L^2(\mathbb{R}^n)}\\
	&\qquad \qquad=\sum_{k=1}^{\infty }\sum_{s=1}^{\infty }\lambda_s^k\left \langle g_s^k,{\left [ b,I_{\alpha}\right]_l(h_{s,1}^k,\dots,h_{s,m}^k)} \right \rangle_{L^2(\mathbb{R}^n)}\\
	\end{split}
	\end{equation*}\\
    Hence, we have that
  	\begin{equation*}
  	\begin{split}  
  	& |\left \langle b,f \right \rangle_{L^2(\mathbb{R}^n)}|\le \sum_{k=1}^{\infty }\sum_{s=1}^{\infty }|\lambda_s^k|\ \|g_s^k\|_{L^{q^{\prime}}(\mu^{1-q^{\prime}}_{\vec{\omega}})}
  		\|{\left [ b,I_{\alpha}\right]_l(h_{s,1}^k,\dots,h_{s,m}^k)}\|_{L^q(\mu_{\vec{\omega}})}\\
  	&\leq\left\|[b, I_\alpha]_{l}: L^{p_{1}}\left(\omega_1^{p_1}\right) \times \cdots \times L^{p_{m}}\left(\omega_m^{p_m}\right) \rightarrow L^{q}\left(\mu_{\vec{\omega}}\right)\right\| \\
  	&\qquad \times \sum_{k=1}^{\infty} \sum_{s=1}^{\infty}\left|\lambda_{s}^{k}\right|\left\|g_{s}^{k}\right\|_{L^{q^{\prime}}\left(\mu^{1-q^{\prime}}_{\vec{\omega}}\right)} \prod_{j=1}^{m}\left\|h_{s, j}^{k}\right\|_{L^{p_{j}}\left(\omega_j^{p_j}\right)}\\
  	& \leq C\left\|[b, I_\alpha]_{l}: L^{p_{1}}\left(\omega_1^{p_1}\right) \times \cdots \times L^{p_{m}}\left(\omega_m^{p_m}\right) \rightarrow L^{q}\left(\mu_{\vec{\omega}}\right)\right\|\ \|f\|_{H^1({\mathbb{R}^n})}. \\
   	\end{split}
   \end{equation*}
   From the duality theorem between $H^1(\mathbb{R}^n)$ and $BMO$, it follows that $b\in BMO$. \qed


\begin{thebibliography}{999}

\bibitem{CW2013}
S. Chen, H. Wu, Multiple weighted estimates for commutators of multilinear fractional integral
operators, Sci. China 56(2013), 1879--1894.

\bibitem{CX2010}
X. Chen, Q. Xue, Weighted estimates for a class of multilinear fractional type operators, J. Math. Anal.
Appl. 362 (2010), 355--373.


\bibitem{CRW1976}R.R. Coifman, R. Rochberg and G. Weiss,
Factorization theorems for Hardy spaces in several variables,
Ann. of Math. 103(1976), 611--635.

\bibitem{GT22002}
L. Grafakos and R.H. Torres, Maximal operator and weighted norm inequalities for multilinear
singular integrals, Indiana Univ. Math. J., 51 (2002), 1261--1276.
\bibitem{GLW2020}
W. Guo, J. Lian and H. Wu, The unified theory for the necessity of bounded commutators and applications, J. of Geom. Anal. 30(2020), 3995--4035.



\bibitem{LOPTT2009}
A. Lerner, S. Ombrosi, C. P\'{e}rez, R. Torres and R. Trujillo-Gonz\'{a}lez,
New maximal functions and multiple weights for the multilinear Calder\'{o}n-Zygmund theory,
Adv. Math. 220(4)(2009), 1222--1264.

\bibitem{LW2018}
J. Li and B.D. Wick, Weak factorizations of the Hardy space $H^{1}(\mathbb{R}^n)$ in terms of Multilinear Riesz transforms, Canad. Math. Bull. 60(2017), 571--585.


\bibitem{M2009}
K. Moen, Weighted inequalities for multilinear fractional integral operators, Collect Math. 60(2009), 213--238.


\bibitem{M1972}
B. Muckenhoupt, Weighted norm inequalities for the Hardy maximal function,
Trans. Amer. Math. Soc. 165(1972), 207--226.


\end{thebibliography}
\end{document}